\newtheorem{thm}{Theorem}[section]
\newtheorem{prop}[thm]{Proposition}
\theoremstyle{definition}
\newtheorem{defn}[thm]{Definition}
\theoremstyle{remark}
\newtheorem{rem}[thm]{Remark}
\numberwithin{equation}{section}
\renewcommand{\epsilon}{\varepsilon}
\begin{document}

\title[Symplectic linearization of semisimple Lie algebra actions]{ A note on symplectic and Poisson linearization of semisimple Lie algebra actions}

\author{Eva Miranda}
\address{ Eva Miranda,
Departament de Matem\`{a}tica Aplicada I, Universitat Polit\`{e}cnica de Catalunya, Barcelona, Spain, \it{e-mail: eva.miranda@upc.edu}}
\thanks{Eva Miranda is partially supported by Ministerio de Econom\'{i}a y Competitividad project ``Geometr\'{\i}a Algebraica, Simpl\'{e}ctica, Aritm\'{e}tica y Aplicaciones'' with reference MTM2012-38122-C03-01/FEDER and by the European Science Foundation network CAST}

\date{\today}%

\begin{abstract}
In this note we prove that an analytic symplectic action of a semisimple Lie algebra can be locally linearized in Darboux coordinates. This result yields simultaneous analytic linearization for Hamiltonian vector fields in a neighbourhood of a common zero.
We also provide an example of smooth non-linearizable Hamiltonian action
with semisimple linear part.  The smooth analogue only holds if the semisimple Lie algebra is of compact type. An analytic equivariant $b$-Darboux theorem for $b$-Poisson manifolds and  an analytic equivariant Weinstein splitting theorem  for general Poisson manifolds are also obtained in the Poisson setting.

\end{abstract}
\maketitle

\section{Introduction}

A classical result due to Bochner \cite{bo} establishes that a compact Lie group action on a smooth manifold is locally
equivalent, in the neighbourhood of a fixed point, to the linear
action. This result holds in the $\mathcal{C}^k$ category.  It is interesting to comprehend to which extent  we can expect a
similar behaviour for non-compact groups.

 As observed in \cite{guilleminsternberg} if the Lie group is connected  the linearization problem can be formulated in the following terms:  find a linear system of coordinates for the vector fields corresponding to the one parameter subgroups of $G$ or, more generally, consider the representation of a Lie algebra and find coordinates on the manifold that simultaneously linearize the vector fields in the image of the representation that vanish at a point. This is the point of view that we adopt in this note when we refer to \emph{linearization}.

 In the formal and analytic
case, the existence of coordinates that linearize the action is
related to a cohomological equation that can be solved when
the Lie group under consideration is semisimple \cite{kushnirenko},
\cite{guilleminsternberg}. Guillemin and Sternberg considered the
problem also in the $\mathcal C^{\infty}$ setting. At the end of  \cite{guilleminsternberg} the celebrated example of
non-linearizable action of $\mathfrak{sl}(2,\mathbb R)$ on $\mathbb R^3$ is constructed.
The example is based on a perturbation using the radial vector field
with flat coefficients. This example has been an outstanding one in the literature
since it yields the construction of many other examples with deep
geometrical meaning (for instance an example of non-stable Poisson
structure proposed by Weinstein in \cite{weinstein}). When the
semisimple Lie algebras are of compact type,
linearization of the action can be obtained simply by  combining local integration to the
action of a compact Lie group $G$ and Bochner's theorem
to linearize the action of the group. This, in its turn, leads to the
linearization of the Lie algebra action \cite{ruiphilippe}.

Another area in which linearization becomes a useful technique is
the one of Hamiltonian systems. When the Hamiltonian system is given by a symplectic action of a
compact Lie group fixing a point, it is known thanks to the
equivariant version of Darboux theorem (\cite{wein1},
\cite{chaperon}) that the action of the group can be linearized in a
neighbourhood of the fixed point in Darboux coordinates. It is noteworthy to understand to which extent this
situation can be achieved in more general contexts than the
compact one. If the Hamiltonian system is given by a complete integrable system
then there is an abelian symplectic action associated to this
integrable system. When  the integrable systems in local coordinates has  an associated \lq\lq linear part" given by a Cartan subalgebra, we
are led to non-degenerate singularities \cite{eliassonthesis}.

Complete integrable systems in a neighbourhood of a non-degenerate
singular point are equivalent to their linear models, as proved in \cite{eliassonelliptic}
,\cite{eliassonthesis},\cite{evathesis} and \cite{mirandazungequiv}, and thus  the Hamiltonian system is equivalent
to the linear one. This result establishes normal forms for
integrable systems in the neighbourhood of a singular non-degenerate
point and, in particular, provides a result of simultaneous linearization of Hamiltonian vector fields in a neighbourhood of a common zero. The next problem to be considered in this list is the case of
Hamiltonian systems which have linear part of semisimple type as proposed by Eliasson in \cite{eliassonelliptic}. In
the formal/analytic setting, according to the result of Guillemin
and Sternberg \cite{guilleminsternberg} and Kushnirenko
\cite{kushnirenko}, those systems are equivalent to the linear one
if the symplectic form is not taken into account. In this  note we prove that
not only the Hamiltonian vector fields can be linearized but also
that they can be linearized in Darboux coordinates.

Following the spirit of Guillemin and Sternberg, we prove that if a
symplectic Lie algebra action of semisimple type fixes a point there
exist  analytic Darboux coordinates in which the analytic vector
fields generating the Lie algebra action are linear. The result also
holds for complex analytic Lie algebra actions on complex analytic
manifolds.  The proof relies on a combination of Moser's path method and the complexification together with Weyl unitarian trick. We also construct an example
of Hamiltonian system with linear part of semisimple type and which
is not $\mathcal{C}^{\infty}$-linearizable. These results  are exported to the context of Poisson manifolds, starting with the case of $b$-Poisson manifolds for which Moser's path method works well. We end up this note proving a linearization theorem for the Lie algebra action in Weinstein's splitting coordinates for a Poisson structure.

In this note we study local rigidity problems for semisimple Lie
algebra actions.
 In \cite{mirandatenerife} we studied the global rigidity problem for compact group actions
 on compact symplectic manifolds proving that  two close smooth actions on a compact symplectic manifold $(M,\omega)$ are conjugate by a diffeomorphism preserving the
 symplectic form. In \cite{mirandamonnierzung} the contact and Poisson case were considered.    The analogous question in the
  analytic context for semisimple Lie groups which are not of
  compact type does not apply in the symplectic context since in view of \cite{delzant}\footnote{ Thanks to Ghani Zeghib for pointing this out.}  there cannot exist actions
  of semisimple Lie groups of non-compact type on a compact
  symplectic manifold.

{\bf{Organization of this note:}} In Section 2 we prove that
given a  real analytic symplectic action of a semisimple Lie
algebra, this action can be linearized in real  analytic Darboux
coordinates in a neighbourhood of a fixed point. This result also
holds for analytic complex manifolds and complex analytic actions of
semisimple Lie algebras. In section 3 we  provide a
counterexample that shows that a linearization result does not hold
in general for any smooth semisimple Lie algebra action. In section 4 we consider the Poisson analogues of Section 2 with a special focus on  the case of $b$-Poisson manifolds.

 {\bf{Acknowledgements}}: I am deeply thankful to Hakan Eliasson
for proposing me this problem and encouraging me to write down this note.
Many thanks also to Ghani Zheghib for many useful comments and
for drawing my attention to  \cite{delzant}. Thanks to David Martínez-Torres for his comments and corrections on a first version of this note. Many thanks to Marco Castrillón for very useful discussions concerning Section 3.

\section{Symplectic Linearization for analytic semisimple Lie algebra actions}

Let $\mathfrak g$ be a {\emph semisimple Lie algebra} and let $\rho:
\mathfrak g\longrightarrow L_{analytic}$ stand for a representation of
$\mathfrak g$ in the algebra of real (or complex) analytic vector fields
on a real (or complex) analytic manifold $M$.

We say that  $p\in M$  is a \textbf{fixed point} for $\rho$ if  the vector fields in $\rho(\mathfrak{g})$ vanish at $p$. We say that $\rho$ can be \textbf{linearized} in a neighborhood of a fixed point if there exist local coordinates in a neighbourhood of $p$ such that the vector fields in the image of $\rho$ can be simultaneously linearized (i.e, $\rho$ is \textbf{equivalent to a linear representation}).

 Guillemin and Sternberg \cite{guilleminsternberg} and Kushnirenko
\cite{kushnirenko} proved the following.

\begin{thm}[Guillemin-Sternberg, Kushnirenko]\label{thm:gs} The representation  $\rho: \mathfrak
g\longrightarrow L_{analytic}$ with $\mathfrak g$  semisimple  is locally equivalent, via an
analytic diffeomorphism, to a linear representation of $\mathfrak g$ in
a neighbourhood of a fixed point for $\rho$.
\end{thm}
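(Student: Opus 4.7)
The plan is to use Weyl's unitary trick: reduce the problem to linearizing the action of a compact real form of $\mathfrak{g}_\mathbb{C}$, where Bochner's theorem applies, and then propagate the result to the full semisimple algebra by complex-linear extension.

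First I would complexify everything. Pass from $M$ to a complex analytic manifold $M_\mathbb{C}$ containing $M$ as a totally real submanifold (in the real analytic case), and from $\mathfrak{g}$ to its complexification $\mathfrak{g}_\mathbb{C}$; extend $\rho$ to a $\mathbb{C}$-linear representation $\rho_\mathbb{C}:\mathfrak{g}_\mathbb{C}\to L_{analytic}$. Choose a compact real form $\mathfrak{k}$ of $\mathfrak{g}_\mathbb{C}$, which exists because $\mathfrak{g}_\mathbb{C}$ is semisimple.

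Second I would integrate $\rho_\mathbb{C}|_{\mathfrak{k}}$ to a local action of the associated simply connected compact Lie group $K$. Since all the vector fields in question vanish at the fixed point $p$, short-time analytic flows exist on a neighborhood of $p$; by a standard shrinking argument near a fixed point, these flows glue into a genuine analytic $K$-action on a smaller $K$-invariant neighborhood. Applying Bochner's theorem to this compact $K$-action produces analytic coordinates in which every vector field in $\rho_\mathbb{C}(\mathfrak{k})$ is linear. Since $\mathfrak{g}_\mathbb{C} = \mathfrak{k}\oplus i\mathfrak{k}$ and the vector fields depend $\mathbb{C}$-linearly on the Lie algebra element, the same coordinates automatically linearize all of $\rho_\mathbb{C}(\mathfrak{g}_\mathbb{C})$; restricting to the real structure in the real analytic case then recovers the linearization of $\rho$.

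The main obstacle I anticipate is the integration step: showing that the local flows of the vector fields $\rho_\mathbb{C}(\xi)$ for $\xi\in\mathfrak{k}$ assemble into a bona fide compact-group action on a common neighborhood of $p$ in the analytic category, rather than merely defining independent local one-parameter subgroups. Once this is settled, the remainder is a direct appeal to Bochner's theorem. An alternative route, closer to the original arguments of Guillemin--Sternberg and Kushnirenko, would be to first produce a formal normalizing transformation using Whitehead's first lemma $H^1(\mathfrak{g}, V) = 0$ to kill the obstruction to linearization at each homogeneous degree $k$, with $V = S^k(T_p^*M)\otimes T_p M$ viewed as a $\mathfrak{g}$-module through the linear part of $\rho$, and then to establish analytic convergence by majorant estimates; the unitary trick sidesteps those majorants at the cost of the integration issue just mentioned.
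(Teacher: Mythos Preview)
The paper does not actually prove this theorem: it is quoted as a known result from \cite{guilleminsternberg} and \cite{kushnirenko}, and then used as a black box in the proof of Theorem~\ref{thm:main}. So there is no proof in the paper to compare your proposal against.

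That said, your proposed strategy is sound, and in fact it is precisely the mechanism the paper invokes in proving the \emph{next} theorem (Theorem~\ref{thm:main}): complexify, pass to a compact real form $\mathfrak h$ of $\mathfrak g\otimes\mathbb C$, integrate to a local action of the compact group $H$, and then use that $\mathfrak h$ and $\mathfrak g$ share the same complexification to transport conclusions back. The integration obstacle you flag is exactly what the paper handles by citing Propositions~2.1 and~2.2 of \cite{guilleminsternberg}, which guarantee that the induced analytic $\mathfrak h$-action near a fixed point lifts to a local analytic action of $H$; once you accept those propositions, Bochner's theorem finishes the job as you describe. Your alternative route via Whitehead's lemma $H^1(\mathfrak g,V)=0$ plus majorant estimates is indeed closer to how the original references argue, and it has the advantage of being self-contained (no appeal to integrating local Lie algebra actions), at the cost of the convergence analysis you mention.
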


When the representation is done by Hamiltonian vector fields (locally symplectic), the analytic diffeomorphism that gives the
equivalence of the initial representation to the linear
representation can be chosen to take the initial symplectic form to
the Darboux one. Namely,

\begin{thm}\label{thm:main}  Let $\mathfrak g$ be a semisimple Lie algebra and let $(M,\omega)$ be a (real or complex) analytic symplectic
manifold. Let   $\rho: \mathfrak g\longrightarrow L_{analytic}$  be a
representation by  analytic symplectic vector fields. Then there
exist local analytic coordinates $(x_1,y_1,\dots,x_n,y_n)$ in a
neighbourhood of a fixed point $p$ for $\rho$ such that $\rho$
is a linear representation and $\omega$ can be written as, $$\omega=\sum_{i=1}^n dx_i\wedge dy_i.$$
\end{thm}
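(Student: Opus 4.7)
The plan is to reduce the theorem to an equivariant Moser argument by first invoking Theorem~\ref{thm:gs}. That theorem provides analytic coordinates $(u_1,\dots,u_{2n})$ near $p$ in which $\rho$ is already a linear representation of $\mathfrak g$. In these coordinates $\omega$ is an analytic, $\mathfrak g$-invariant symplectic form, and the constant-coefficient 2-form $\omega_1$ on a neighbourhood of the origin defined by $\omega_1:=\omega|_p$ is also $\mathfrak g$-invariant, since the linear action fixes $p$ and hence preserves $\omega|_p$. The task thus becomes to build an analytic $\mathfrak g$-equivariant local diffeomorphism $\phi$ fixing $p$ with $\phi^*\omega_1=\omega$; a final linear symplectic change of variables will then transform $\omega_1$ to the standard Darboux form $\sum_i dx_i\wedge dy_i$ while merely conjugating the linear representation.

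To build $\phi$ I would run Moser's path method with $\omega_t=(1-t)\omega+t\omega_1$, a $\mathfrak g$-invariant analytic family of 2-forms that is symplectic on a small enough neighbourhood of $p$ since $\omega|_p=\omega_1|_p$. Differentiating $\phi_t^*\omega_t=\omega$ and using Cartan's formula together with $d\omega_t=0$ reduces the construction of $\phi$ to solving $\iota_{X_t}\omega_t=\alpha$, where $\alpha$ is an analytic primitive of $\omega-\omega_1$. If $\alpha$ can be chosen $\mathfrak g$-invariant then $X_t$ is automatically $\mathfrak g$-invariant, and since $\omega-\omega_1$ vanishes at $p$ the primitive can be arranged to vanish to second order there, so that $X_t(p)=0$ and its flow exists on a common neighbourhood for all $t\in[0,1]$. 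The time-$1$ map then gives the desired $\phi$.

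The critical step, and the main obstacle, is the construction of an invariant analytic primitive. Because the representation is already linear, the vector fields of $\rho(\mathfrak g)$ commute with the Euler radial vector field $R=\sum u_i\partial_{u_i}$, so the standard radial Poincar\'e homotopy operator is $\mathfrak g$-equivariant and, applied to $\omega-\omega_1$, produces an analytic invariant primitive vanishing to second order at $p$. Alternatively, and in the spirit of the introduction, one can complexify the action, apply Weyl's unitarian trick to replace $\mathfrak g_{\mathbb C}$ by a compact real form $\mathfrak k$ acting linearly on $\mathbb C^{2n}$, and average any analytic primitive over the corresponding compact group $K$; the averaged primitive is $K$-invariant, hence $\mathfrak k_{\mathbb C}=\mathfrak g_{\mathbb C}$-invariant, hence $\mathfrak g$-invariant, while analyticity is preserved because $K$ acts by linear maps. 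Either route furnishes the invariant $\alpha$, and Moser's flow followed by a linear Darboux normalization completes the proof.
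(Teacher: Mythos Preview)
Your proposal is correct and follows the same overall architecture as the paper: first invoke Theorem~\ref{thm:gs} to make the action linear, then run an equivariant Moser argument to normalize the symplectic form. The difference lies in how the $\mathfrak g$-invariant primitive for Moser's equation is produced. The paper takes exactly your ``alternative'' route: it complexifies the linear action, passes to a compact real form $\mathfrak h\subset\mathfrak g\otimes\mathbb C$, integrates $\mathfrak h$ to a compact group $H$, averages the Moser vector field (equivalently, the primitive) over $H$, and then restricts back to the real slice. Your primary route---observing that the Euler field $R=\sum_i u_i\,\partial_{u_i}$ commutes with every linear vector field, so that the radial Poincar\'e homotopy operator is automatically $\mathfrak g$-equivariant once the action is linear---is a genuine simplification that bypasses the complexification and averaging entirely; it also makes the analyticity of the primitive and its second-order vanishing at $p$ transparent. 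What the paper's approach buys is portability: it is phrased so as to reduce directly to an existing compact-group equivariant Darboux/normal-form theorem, which is exactly the mechanism reused later in the paper for the $b$-symplectic and general Poisson settings.
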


\begin{proof}

Let $p\in M$ be a fixed point for $\rho$. By virtue of Darboux
theorem we may assume that $\omega$ is  $\omega_0=\sum_{i=1}^n dx_i\wedge dy_i$ for analytic coordinates
$(x_1,y_1,\dots,x_n,y_n)$ centered at $p$. Consider the linearized action $\rho^{(1)}$. Observe that if $\rho$ preserves
$\omega_0$ so does $\rho^{(1)}$.
By virtue of theorem
\ref{thm:gs} in the new local coordinates, we may assume that the action $\rho$ is linear but, in principle, the symplectic form in the new coordinates is not of Darboux type.
The new symplectic form is $\omega_1=\phi^*(\omega_0)$ (with $\phi$ the diffeomorphism provided by Theorem \ref{thm:gs}).

 It remains to show that we can take the analytic symplectic
form $\omega_1$ to  $\omega_0$ in such a way the representation
$\rho$ is preserved. For that purpose we apply the path method \cite{moser} for analytic symplectic
structures.

 Since $\omega_1$ belongs to the same
 cohomology class as $\omega_0$ we can write  $\omega_1=\omega_0+d\alpha$
 for an analytic $1$-form $\alpha$. Let us consider the linear path of analytic 2-forms
$\omega_t=t\omega_1+(1-t) \omega_0, \quad  t\in [0,1]$ which is symplectic in a neighbourhood of $p$. Notice that this path is invariant under the action $\rho$ since
both $\omega_0$ and $\omega_1$ are invariant by this action.
 Let $X_t$ be the analytic vector field defined by the Moser's equation,
\begin{equation}
\label{eqn:cohom} i_{X_t}\omega_t=-\alpha
\end{equation}
 Let $X_t^c$ be the complexified holomorphic vector field on
 $\mathbb C^{2n}$
associated to the real analytic vector field $X_t$. We use this vector field to find an analytic
diffeomorphism taking $\omega_0$ to $\omega_1$ and preserving the
linear character of  $\rho$. Let ${\rho}^c$ be the extended action of the complex semisimple Lie
algebra $\mathfrak g\otimes \mathbb C$ associated to $\rho$ defined as
follows. By choosing a sufficiently small disk centered at $p$ the
linear analytic vector fields in the image of $\rho^{(1)}$ can be
analytically extended to a small disk in $\mathbb C^{2n}$ to get a
representation by holomorphic vector fields. We then define the
linear action of the complex semisimple Lie algebra $\mathfrak g\otimes
\mathbb C$  via the formula
$$\rho^c(\sqrt{-1}x):=\sqrt{-1}\rho(x).$$

Let $\mathfrak h$ be a compact real form for the complex Lie
algebra $\mathfrak g\otimes \mathbb C$. The compact simply connected Lie
group integrating $\mathfrak h$ is denoted by $H$. Even if the Lie
algebra $\mathfrak h$ is real we need to use the complexified action
${\rho}^c$ since it is identified as subalgebra of $\mathfrak g\otimes
\mathbb C$ and has as action the induced action of ${\rho}^c$.

 By virtue of Proposition 2.1 and Proposition 2.2 in \cite{guilleminsternberg}  the analytic Lie algebra
 action induced on $\mathfrak h$ (which we denote by $\rho^c_{\mathfrak h}$ ) can be integrated to a local Lie group
 analytic
 action of the compact Lie group $H$, $\rho_H$. Consider the
averaged vector field with respect to a Haar measure on $H$: $$X_t^{c,H}=\int_G {{\rho^{c}_{H}}_{h}}_{*}(X_t^c) d\mu.$$

Observe that, by construction, the vector field $X_t^{c,H}$  is
invariant by the action of the Lie group $H$, $\rho^c_H$. Let $\phi_t$ be its flow ($ X_t^{c,H}(\phi_t^c(q))= {\partial \phi_t^c \over
\partial t} (q)$). It is an analytic diffeomorphism that preserves the
action of $\rho^c_H$. Thus it also preserves the action $\rho_{\mathfrak
h}^c$.

 Since $\phi_t^c$ is analytic and the lie algebras
 $\mathfrak h$ and $\mathfrak g$ have the same complexification, the family $\phi_t^c$ also
preserves the action of $\mathfrak g\otimes \mathbb C$,  $\rho^c$.

Let $X_t^H=Re{X_t^{c,H}}$ the real part of the vector field
$X_t^{c,H}$. Consider the real vector field $Y_t^H (x,y)=X_t^H(x,y,0,0)$ (we
use here the convention $(x,y)+i(y,t)$ is the complexification of
$(x,y) $ with $(x,y)=(x_1,\dots,x_n,y_1,\dots,y_n)$ and
$(z,t)=(z_1,\dots,z_n,t_1,\dots,t_n)$. By construction this vector
is invariant by the action of $\rho$. Now consider $\phi_t$  ($\phi_0=Id$) the family of analytic
diffeomorphisms satisfying the equation:\begin{equation}
\label{eqn:autonomous} Y_t^H(\phi_t(q))= {\partial \phi_t \over
\partial t} (q).\end{equation} The diffeomorphism   $\varphi_t:=\phi_t$ preserves the action $\rho$ and
thus preserves the linearity of the representation $\rho$.

In order to check that ${\varphi_t}^*{\omega_t}=\omega_0$, following Moser's trick, it is enough to check that the vector
field $Y_t^H$ satisfies the cohomological equation $i_{Y_t^H}\omega_t= -\beta$ where $d\beta=\omega_1-\omega_0$.

So as to do that, consider  the complexification of the
vector field $X_t$ that we denote by  $X_t^c$ which fulfills,\begin{equation}
\label{eqn:holmoser}
 i_{X_t^c} \omega_t^c=-\alpha^c
\end{equation}
\noindent where $X_t^c$, $\omega_t^c$ and $\alpha_t^c$ are the
complexification  of $X_t$, $\omega_t$ and $\alpha$
($d\alpha=\omega_1-\omega_0$) respectively.

Since $\omega_0$ and $\omega_1$ are invariant by the
linear  action $\rho$. The complexified forms $\omega_0^c$ and
$\omega_1^c$ are invariant under the action $\rho^c$ and  the path
$\omega_t^c$ is invariant by $\rho^c$. This path is also invariant
by the lifted Lie group action $\rho_H$.

Therefore from equation \ref{eqn:holmoser},  for each
$h\in\mathfrak h$ we obtain, $i_{{\rho_H{h}}_*{X_t^c}}\omega_t^c=-{{\rho_{H}}_{h}}_*(\alpha^c).$
Now averaging yields, $i_{X_t^{c,H}}\omega^c_t=-\int_H{{\rho_{H}}_{h}}_*(\alpha^c)$
here the new 1-form $\beta^c=-\int_H{{\rho_{H}}_{h}}_*(\alpha^c)$
still satisfies $d\beta^c=\omega_1^c-\omega_0^c$ due to
$\rho_H$-invariance of $\omega_0^c$ and $\omega_1^c$. For the vector field $Y_t^H$ defined by equation \ref{eqn:autonomous}, we obtain
$i_{Y_t^H}\omega_t= -\beta$ \noindent and by Moser's trick  the diffeomorphism $\phi_t$ defined in
\ref{eqn:autonomous} preserves the linearity of the representation
and takes $\omega_t$ to $\omega_0$. This ends the proof of the theorem.
\end{proof}

\section{The smooth case}

\subsection{The counterexample of Cairns and Ghys}

In this section we recall the results of Cairns and Ghys concerning
a $\mathcal{C}^{\infty}$-action of  $SL(2,\mathbb R)$ on $\mathbb
R^3$ which is not linearizable. All results mentioned in this
section are contained in section  $8$ of \cite{cairnsghys}.

Consider the basis $\{X,Y,Z\}$ of $\mathfrak{sl}(2,\mathbb R)$
satisfying the relations:
\[ [X,Y]=-Z,\quad  [Z,X]=Y,  \quad[Z, Y]=-X\]
Now consider the representation on $\mathbb R^3$ defined on this
basis as: \begin{equation}\label{eqn:linear}
\begin{array}{rlll}
\rho(X)&=& y\frac{\partial}{\partial z}+z\frac{\partial}{\partial y}  \\
\rho(Y)&=&x\frac{\partial}{\partial z}+z\frac{\partial}{\partial x} \\
\rho(Z)&=&x\frac{\partial}{\partial y}-y\frac{\partial}{\partial x}\\
\end{array}
\end{equation}

For this action the orbits are the level sets of the quadratic form
$x^2-y^2-z^2=r^2-z^2$ (where $r^2=x^2+y^2$).

We consider the deformation of this action as follows: Let
$R=x\frac{\partial}{\partial x}+y\frac{\partial}{\partial y}+
z\frac{\partial}{\partial z}$ stand for the radial vector field and take, \begin{equation}\label{eqn:perturbed}
\begin{array}{rlll}
\widetilde{X} &= &\rho(X)+fR,  \\
\widetilde{Y} &= &\rho(Y)+gR,\\
\widetilde{Z} & =&\rho(Z)\\
\end{array}
\end{equation}
\noindent with $f=xA(z,\sqrt{(x^2+y^2)})$ and $g=-yA(z,\sqrt{(x^2+y^2)})$ with
 $A(z,r)=\frac{a(r^2-z^2)}{r^2}$ where $a:\mathbb R\rightarrow
 \mathbb R$ is any $\mathcal C^{\infty}$ function which is zero on
 $\mathbb R^{-}$ and bounded.

 Following \cite{cairnsghys}, these new vector fields
 $\widetilde{X},
\widetilde{Y},  \widetilde{Z}$ generate a Lie algebra isomorphic to
$\mathfrak{sl}(2,\mathbb R)$ and define complete vector fields (since
the function $a$ is bounded). Therefore the defined action
$\hat\rho$ integrates to an action of a Lie group covering
$SL(2,\mathbb R)$. Since $Z$ is left the same, this action has to be
an action of $SL(2,\mathbb R)$. As it is  checked in
\cite{cairnsghys}, the orbits of this action are of dimension 3
(since the vector fields generating the action are independent)
whenever the function $a$ does not vanish\footnote{From the construction the set of points where $a$ does not vanish coincides with the hyperbolic elements of $SL(2,\mathbb R)$.} and coincides with the
initial (linear) action at $C=\{(x,y,z), x^2+y^2\leq z^2\}$. This justifies that the action
cannot be linearizable since the orbits of the linear action do no
have dimension three.
\begin{rem} The example of Guillemin and Sternberg \cite{guilleminsternberg} goes through the
following guidelines. It is quite similar to the counterexample of Grant and Cairns the difference is that vector field $Z$ is not preserved by the perturbation so  it cannot be guaranteed that it lifts to $SL(2,\mathbb R)$.
 If we perturb the initial action of $\mathfrak{sl}(2,\mathbb R)$
to the non-linear action: \begin{align*}
\hat\rho(X) &=\rho(X)+\frac{xz}{r^2}g(r^2-z^2)R,\\
\hat\rho(Y) &=\rho(Y)-\frac{yz}{r^2}g(r^2-z^2)R,\\
\hat\rho(Z) &=\rho(Z)+g(r^2-z^2)R,
\end{align*}
where $R=x\frac{\partial}{\partial x}+y\frac{\partial}{\partial y}+
z\frac{\partial}{\partial z}$ is the radial vector field, and $g\in
C^\infty(\mathbb R)$ is such that $g(x)>0$, if $x>0$, and $g(x)=0$,
if $x\le 0$. The orbits of $\rho$ coincide with the orbits of $\rho$ inside the
cone $r^2=z^2$. Outside this cone, the orbits of $\rho(Z)$ spiral
towards the origin while the orbits of $\rho(Z)$ are circles. Hence,
$\rho$ is not linearizable.
\end{rem}

\subsection{The Hamiltonian counterexample}

In this section we prove the following proposition which gives an
example of Hamiltonian action with linear part of semisimple type
and not $\mathcal{C}^{\infty}$-linearizable.

\begin{prop} Let $\alpha$ stand for the Lie group action of
$SL(2,\mathbb R)$ on $\mathbb R^3$ generated by the vector fields,
\begin{align*}
\overline{X} &=\rho(X)+fR,  \\
\overline{Y} &= \rho(Y)+gR,\\
\overline{Z} & = \rho(Z)
\end{align*}
 with $f=xA(z,\sqrt{(x^2+y^2)})$ and $g=-yA(z,\sqrt{(x^2+y^2)})$ with
 $A(z,r)=\frac{a(r^2-z^2))}{r^2}$ where $a:\mathbb R\rightarrow
 \mathbb R$ is any $\mathcal C^{\infty}$ function which is zero on
 $\mathbb R^{-}$ and bounded.

 Consider the lifted action $\hat{\alpha}$ of $SL(2,\mathbb R)$
to $T^*(\mathbb R^3)$ then this action is Hamiltonian and $\mathcal
C^{\infty}$ non-linearizable.

\end{prop}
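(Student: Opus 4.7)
The first statement reduces to a standard fact about cotangent lifts. The lifted action $\hat\alpha$ preserves the canonical Liouville $1$-form $\theta$ on $T^{*}\mathbb R^{3}$, and hence preserves $\omega=-d\theta$. A direct computation shows that the fundamental vector field of $\xi\in\mathfrak{sl}(2,\mathbb R)$ equals the Hamiltonian vector field of $H_{\xi}(q,p):=\langle p,\xi_{\mathbb R^{3}}(q)\rangle$, where $\xi_{\mathbb R^{3}}$ denotes the infinitesimal generator of $\alpha$ on $\mathbb R^{3}$. The assignment $\xi\mapsto H_{\xi}$ is a Lie-algebra homomorphism into $(C^{\infty}(T^{*}\mathbb R^{3}),\{\cdot,\cdot\})$, so $\hat\alpha$ is Hamiltonian with canonical equivariant moment map $\mu(q,p)(\xi)=\langle p,\xi_{\mathbb R^{3}}(q)\rangle$.

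\textbf{Non-linearizability: strategy.} The origin is a fixed point of $\hat\alpha$. Argue by contradiction: suppose $\Phi$ is a smooth local diffeomorphism of $T^{*}\mathbb R^{3}$ fixing $0$ and conjugating $\hat\alpha$ to its linear part $\hat\alpha^{(1)}$. The zero section $Z\subset T^{*}\mathbb R^{3}$ is $\hat\alpha$-invariant with $\hat\alpha|_{Z}$ canonically identified with $\alpha$; therefore $\Sigma:=\Phi(Z)$ is a smooth $3$-dimensional $\hat\alpha^{(1)}$-invariant submanifold through $0$, and $\hat\alpha^{(1)}|_{\Sigma}$ is smoothly conjugate to $\alpha$ via $\Phi|_{Z}$. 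The plan is to project $\Sigma$ back onto the base by an \emph{equivariant} linear projection, thereby producing a smooth linearization of $\alpha$ and contradicting the Cairns--Ghys obstruction recalled in the previous subsection.

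\textbf{Representation-theoretic step and conclusion.} Decompose $T_{0}(T^{*}\mathbb R^{3})=V\oplus V^{*}$, where $V=\mathbb R^{3}$ carries the standard representation from \eqref{eqn:linear} (the adjoint representation of $\mathfrak{sl}(2,\mathbb R)$, which preserves $x^{2}-y^{2}-z^{2}$ and is absolutely irreducible) and $V^{*}$ is its dual. Since $V$ is absolutely irreducible and self-dual via the Killing form, Schur's lemma classifies the $3$-dimensional $SL(2,\mathbb R)$-invariant subspaces of $V\oplus V^{*}$ as precisely $V\oplus 0$, $0\oplus V^{*}$, or graphs $\{(v,f(v)):v\in V\}$ of equivariant isomorphisms $f\colon V\to V^{*}$. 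Applied to $W:=T_{0}\Sigma$, this forces at least one of the coordinate projections $p_{1}\colon V\oplus V^{*}\to V$ or $p_{2}\colon V\oplus V^{*}\to V^{*}$ to restrict to a linear isomorphism on $W$. In the first case, $p_{1}\circ\Phi|_{Z}\colon\mathbb R^{3}\to V$ has invertible derivative at $0$ and is $SL(2,\mathbb R)$-equivariant (for $\alpha$ on the source and $\alpha^{(1)}$ on the target), hence is a smooth local linearization of $\alpha$; in the second case, composing $p_{2}\circ\Phi|_{Z}$ with a fixed equivariant isomorphism $V^{*}\to V$ (provided by self-duality) yields one as well. Either alternative contradicts the $\mathcal C^{\infty}$ non-linearizability of $\alpha$.

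The main technical point is the Schur-type classification guaranteeing that at least one coordinate projection restricts to an isomorphism on $W$; once this is in hand the argument is bookkeeping with the equivariance of the linear projections $p_{1},p_{2}$ and with the self-duality of $V$.
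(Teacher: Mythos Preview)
Your argument is correct, and it proceeds along a genuinely different line from the paper's own proof. The paper argues by comparing orbit-dimension stratifications: it identifies the set $S$ of $0$- and $2$-dimensional orbits of the linearized lifted action $\hat\alpha^{(1)}$, observes that for the deformed lift $\hat\alpha$ every orbit outside the lifted solid cone $\hat C$ has dimension $3$, and concludes that any conjugating diffeomorphism would have to carry $S$ onto $S\cap\hat C$, which is impossible since the germs of these sets at the origin are not diffeomorphic. Your approach instead exploits the $\hat\alpha$-invariance of the zero section to push the problem back down to the base: a Schur-type classification of the $3$-dimensional invariant subspaces of $V\oplus V^{*}$ (with $V$ the absolutely irreducible, self-dual adjoint representation) guarantees that one of the two equivariant coordinate projections restricts to an isomorphism on $T_{0}\Phi(Z)$, so that composing $\Phi|_{Z}$ with that projection (and, if needed, with the Killing-form identification $V^{*}\cong V$) yields a local smooth linearization of $\alpha$ itself, contradicting Cairns--Ghys. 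The trade-off is that the paper's route is more explicit about the orbit geometry of the lifted action and its moment map, whereas your route is shorter, avoids any computation of orbit dimensions in $T^{*}\mathbb R^{3}$, and makes transparent that non-linearizability of the cotangent lift is inherited directly from non-linearizability of the base action whenever the isotropy representation at the fixed point is absolutely irreducible.
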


\begin{proof}

 Given an action $\rho:G\times M\longrightarrow M$,
 the lifted action $\hat{\rho}$  to the  cotangent bundle
of $M$ is defined as  ,
$\hat{\rho_g}(\beta)=\rho_{g^-1}^*(\beta)$ with $\beta\in T^*(M)$. In coordinates $(q,p)$ of the cotangent bundle $T^*M$, we may define it as
\begin{equation}\label{eqn:cotangentlift}
\hat\rho_g(q,p)=(\rho_g(q), (d\rho_g)_q(p))
\end{equation}
 The cotangent bundle $T^*M$ is a symplectic manifold endowed
with the symplectic form $\omega=d\theta$ with $\theta$ the Liouville one-form. The Liouville one-form can be defined intrinsically as follows
$$\langle \theta_p, v\rangle:= \langle p, d\pi_p(v)\rangle$$ \noindent with  $v\in T(T^*M), p\in T^*M$.

It is well-known that the lifted action $\hat{\rho}$  is Hamiltonian.  Consider  $p\in T^*M, x\in \mathfrak g$ and let $\pi$  be
the standard projection from $T^*M$ to $M$ and  denote by  $\xi_{v}$
the fundamental vector field on $M$ generated by $v$. The  Hamiltonian
function $\mu:T^*M\longrightarrow \mathfrak{g}^*$ is:
\begin{equation}\label{eqn:lift}
\langle\mu(p),v \rangle= \langle \theta_p ,\xi_v \rangle =\langle p,\xi_{v}(\pi(p))\rangle
\end{equation}
where in the last equality we have used the definition of Liouville one-form.

From \ref{eqn:lift} we may define the components of the moment map $\mu_i$ associated to the lift of the vector $\xi_{v_i}$ as $\mu_i=\langle\theta,\xi_{v_i}\rangle$.

In our case, let us take $M=\mathbb R^3$, $G=SL(2,\mathbb R)$ and as
action $\alpha$. Then its lifted action $\hat{\alpha}$,
is a Hamiltonian action with linear part of type $\mathfrak{sl}(2,\mathbb R)$. We will prove that the lifted action is not $\mathcal C^{\infty}$-linearizable.

There are two important well-known facts concerning the lifted action to the cotangent bundle endowed with coordinates $(q,p)$:
\begin{itemize}
 \item By construction, the lifted action leaves the
zero-section of $T^*(\mathbb R^3)$ invariant and for $q\neq 0$, the projection $\pi$  sends  the orbit by the action $\hat{\alpha}$ through the point $(q,p)$, $\mathcal{O}^{\hat{\alpha}}_{(q,p)}$ onto the orbit  through the base point $q$ for the original action $\alpha$, $\mathcal{O^{\alpha}}_{q}$. In particular,  $\dim \mathcal{O^{\hat{\alpha}}}_{(q,p)}\geq \dim \mathcal{O^{\alpha}}_{q}$.

\item From the equation \ref{eqn:cotangentlift} the action is linear on the fiber over a fixed point of $\alpha_g$. So for $q=0$ the action on the fibers is the dual to the linear action ${\alpha}^{(1)}$. In particular the action restricted to the fiber over $q=0$ has as orbits the origin ($p=0$) which is a $0$-dimensional orbit  and  $2$-dimensional orbits (for $p\neq 0$).

\end{itemize}

Applying the first property to the example $\hat{\alpha}$ we can conclude that for points that project to hyperbolic orbits on the base,  $\dim \mathcal{O^{\hat{\alpha}}}_{(q,p)}\geq \dim \mathcal{O^{\alpha}}_{q}=3$ (since from the example of  Cairns and Ghys \cite{cairnsghys} the orbits with  $a>0$ are of dimension $3$).

We denote by $\hat{\alpha}^{(1)}$ the linearization of the lifted action. In view of the remarks above, this action coincides with the linear action $\hat{\alpha}^{(1)}$ when restricted to the zero-section $p=0$ and to its dual $\hat{\alpha}^{(1)*}$ when restricted to $q=0$. In particular, the linear action restricted to the set $\Omega=\{p=0\}\cup \{q=0\}$  is formed by $2$ and $0$-dimensional orbits.

Let us denote by $S$ the set of  all $0$ and $2$-dimensional orbits of the lifted linear action. We can compute explicitly this set, by identifying the lifted vector fields of the action with the Hamiltonian vector fields with moment map components $\theta(X_i)$ with $X_i$ the generators of the action on the base. Denoting as Liouville one-form $\theta=adx+bdy+cdz$, the moment map  of the lifted action given by equations \ref{eqn:linear} is $\mu=(zb+cy,az+xc,-ay+bx)$. The dimension of the orbits can be computed using the rank of $d\mu$ which is generically $3$ and is $2$ in a union of manifolds described by the vanishing of a set  of minors (given by linear and quadratic homogeneous polynomials)\footnote{
 Identifying this action with the lifted coadjoint action of $SL(2,\mathbb R)$, we can indeed describe this set of $2$-dimensional orbits  in terms of the isotropy groups of the components of  $(q,p)$. We thank Marco Castrillón for enlightening our computations with this beautiful idea.}.

 Let us now come back to the lifted deformed action $\alpha$ defined by equations \ref{eqn:perturbed}.
The solid cone $C=\{(x,y,z), x^2+y^2\leq z^2\}$ on the zero section is saturated by parabolic and elliptic orbits of the linear action $\alpha^{(1)}$ (as defined in equation \ref{eqn:linear}) of $SL(2,\mathbb R)$. Its \lq\lq lifted\rq\rq cone, $\hat C$, is saturated by the orbits of the lifted linear action. Observe that, by construction, both actions $\alpha$ and $\alpha^{(1)}$ coincide on the lifted solid cone.

Now assume that there existed a diffeomorphism $\phi$ conjugating the actions $\hat{\alpha}^{(1)}$ and $\hat{\alpha}$, $\phi\circ\hat{\alpha}^{(1)}\circ \phi^{-1} = \hat{\alpha}$. The set $\hat C$ is formed by 0, 2 and 3-dimensional orbits of the lifted linear action. Consider, $S$ the set of $0$ and $2$-dimensional orbits by the lifted linear action.

 In case there existed a diffeomorphism $\phi$ conjugating the actions $\hat{\alpha}^{(1)}$ and $\hat{\alpha}$ then $\phi(S)=S\cap \hat{C}$ (since all the orbits by the action of $\hat{\alpha}$ outside $\hat{C}$ are 3-dimensional).
    Observe that the origin $O$--zero-dimensional orbit-- goes to the origin--zero-dimensional orbit-- but the neighbourhoods of $O$ in   and $S$ and in $\phi(S)$ cannot be diffeomorphic. Thus $\hat{\alpha}$
cannot be equivalent to the linear action $\hat{\alpha}^{(1)}$.
\end{proof}

\subsection{The case of semisimple Lie algebras of compact type}

When the Lie algebra action is of compact type, it can be integrated to an
action of a compact Lie group $G$ (see \cite{ruiphilippe} for a proof
of that fact is done using algebroids).

Given a fixed point for the action $p$,  we can associate a  linear
action of the group in a neighbhourhood of $p$ and the action of the group both preserving the
symplectic structure (which we can assume to be in Darboux coordinates). Apply the equivariant Darboux theorem
 \cite{chaperon} to obtain a diffeomorphims $\phi$  that linearizes the group action $G$ in Darboux
 coordinates. By differentiation we get linearization of the Lie algebra action $\rho$.

\section{ Linearization for semisimple actions on Poisson manifolds}

In this section we consider representations of Lie algebras by vector fields preserving a  Poisson structure. We present an equivariant Weinstein theorem for analytic actions of semisimple algebras on analytic Poisson manifolds.

A Poisson structure on a manifold $M$ is a bivector field $\Pi\in \Gamma(\Lambda^2 TM)$ satisfying the  equation $[\Pi, \Pi]=0$ where the bracket $[\cdot, \cdot]$, called Schouten bracket, is an extension of the Lie bracket for vector fields to bivectors. The first observation is that there are no dimensional (dimension may be odd) or topological constraints on a smooth/analytic manifold to admit a Poisson structure.

\subsection{The case of $b$-Poisson manifolds}

We start dealing with a special class of even dimensional Poisson manifolds which have been recently studied in \cite{guimipi12} called $b$-Poisson (or $b$-symplectic) manifolds. The study of this manifold was originally motivated for the study of calculus on manifolds with boundary \cite{melrose} and deformation quantization on symplectic manifolds with boundary \cite{nestandtsygan}.

\begin{defn}\label{definition:firstb}
Let $(M^{2n},\Pi)$ be a Poisson manifold such that the map
$$p\in M\mapsto(\Pi(p))^n\in\Lambda^{2n}(TM)$$
is transverse to the zero section, then $Z=\{p\in M|(\Pi(p))^n=0\}$ is a hypersurface and we say that $\Pi$ is a \textbf{$b$-Poisson structure} on $(M,Z)$ and $(M,Z)$ is a \textbf{$b$-Poisson manifold}.
\end{defn}

As it is seen in \cite{guimipi12}, the class of $b$-Poisson manifolds shares many properties with the class of symplectic manifolds. The reason for this is that a $b$-Poisson structure can be studied using the language of forms and the path method works also well in this category. More concretely, using the language of $b$-forms instead of the language of bivectors native to Poisson geometry.

 A $b$-form of degree $k$ is a section of the bundle $\Lambda^{k}(^b T^*M)$ where the bundle $^b T^*M$ is defined by duality $^b T^*M= (^b TM)^*$. The bundle $^b TM$ is defined à la Serre-Swan as the bundle whose sections are vector fields on $M$ which are tangent to the critical hypersurface $Z$ (for more details refer to \cite{guimipi12}). A $b$-symplectic structure is  a closed  $b$-form  of degree $2$ that is non-degenerate (i.e of maximal rank as an element of $\Lambda^2(\,^b T_p^* M)$ for all $p\in M$). One may assign a $b$-symplectic structure to a $b$-Poisson structure and viceversa \cite{guimipi12}.

As a first application of the path method for $b$-symplectic forms we obtain a $b$-Darboux theorem (for a proof see \cite{guimipi12}),

\begin{prop}[\textbf{b-Darboux Theorem}]\label{prop:localb}
Let $(M,Z)$ be a $b$-Poisson manifold, with Poisson bivector field $\Pi$ and dual two-form $\omega_\Pi$. Then, on a neighborhood of a point $p\in Z$, there exist coordinates $(x_1,y_1,\dots x_{n-1},y_{n-1}, z, t)$  centered at $p$ such that
$$\omega_{\Pi}=\sum_{i=1}^{n-1} dx_i\wedge dy_i+\frac{1}{z}\,dz\wedge dt.$$
\end{prop}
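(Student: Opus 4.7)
The plan is to mimic the Moser path method employed in the proof of Theorem \ref{thm:main}, but now in the category of $b$-forms. Concretely, I will produce a linear $b$-symplectic model $\omega_0$ whose agreement with $\omega_\Pi$ can be arranged along $Z$, and then isotope $\omega_\Pi$ to $\omega_0$ through the flow of an auxiliary $b$-vector field tangent to $Z$.

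First, I would set things up in suitable coordinates along $Z$. Since $Z$ is a smooth hypersurface through $p$, pick ordinary coordinates $(u_1,\dots,u_{2n-1},z)$ centered at $p$ with $Z=\{z=0\}$. Using the natural $b$-coframe $\{dz/z,\,du_1,\dots,du_{2n-1}\}$, decompose $\omega_\Pi = (dz/z)\wedge \alpha + \beta$ with $\alpha,\beta$ smooth forms containing no $dz$-component. A direct computation shows that $b$-closedness of $\omega_\Pi$ forces $d\alpha|_Z=0$ and $d\beta|_Z=0$, while $b$-non-degeneracy forces $\alpha|_Z\wedge (\beta|_Z)^{n-1}$ to be a volume form on $Z$. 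Thus $(\alpha|_Z,\beta|_Z)$ is a cosymplectic pair on the $(2n-1)$-manifold $Z$, and the classical Carath\'eodory--Jacobi--Lie theorem yields coordinates $(x_1,y_1,\dots,x_{n-1},y_{n-1},t)$ on $Z$ near $p$ in which $\alpha|_Z = dt$ and $\beta|_Z = \sum_i dx_i\wedge dy_i$. Extending these arbitrarily off $Z$ and keeping $z$, one obtains a local chart in which the candidate model $\omega_0 := (dz/z)\wedge dt + \sum_i dx_i\wedge dy_i$ agrees with $\omega_\Pi$ on $\,^bTM|_Z$.

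Next I carry out Moser's trick in the $b$-category. Set $\omega_t = (1-t)\omega_0 + t\omega_\Pi$ for $t\in[0,1]$. Since $\omega_0$ and $\omega_\Pi$ coincide on $\,^bTM|_Z$, where they are $b$-non-degenerate, $\omega_t$ is $b$-symplectic on a neighborhood of $p$ for every $t\in[0,1]$. The difference $\sigma:=\omega_\Pi-\omega_0$ is a closed $b$-$2$-form vanishing on $\,^bTM|_Z$; granting a relative $b$-Poincar\'e lemma, one writes $\sigma = d\mu$ for a $b$-$1$-form $\mu$ with $\mu|_Z=0$. Solving the Moser equation $i_{X_t}\omega_t=-\mu$ produces a smooth time-dependent $b$-vector field $X_t$; since $\mu$ vanishes on $Z$ and $\omega_t$ is $b$-non-degenerate there, $X_t$ is an honest smooth vector field tangent to $Z$ and vanishing at $p$. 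Its flow $\varphi_t$ is defined near $p$, fixes $p$, preserves $Z$, and the standard computation $\tfrac{d}{dt}\varphi_t^*\omega_t = \varphi_t^*(d\,i_{X_t}\omega_t+\sigma)=0$ yields $\varphi_1^*\omega_\Pi=\omega_0$. Composing the chart above with $\varphi_1$ gives the desired $b$-Darboux coordinates.

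The principal technical obstacle is the relative $b$-Poincar\'e lemma invoked in the Moser step: any closed $b$-$2$-form vanishing on $\,^bTM|_Z$ must be exact near $p$, with a primitive that also vanishes on $Z$. This is proved by decomposing $\sigma = (dz/z)\wedge\eta + \tau$, where $\eta|_Z=0$ and $\tau|_Z=0$, and applying the classical relative Poincar\'e homotopy operator to each piece, verifying that vanishing along $Z$ is preserved. It is precisely this cohomological input, valid thanks to the $b$-tangent structure on $M$, that lets the path method survive the Poisson degeneracy of $\omega_\Pi$ along $Z$ and deliver a bona fide smooth isotopy to the linear model.
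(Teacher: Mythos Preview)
The paper does not actually prove Proposition~\ref{prop:localb}; it states the result and refers the reader to \cite{guimipi12} for a proof, noting only that it is ``a first application of the path method for $b$-symplectic forms.'' Your proposal is precisely a Moser path-method argument in the $b$-category---first normalizing along $Z$ via the induced cosymplectic structure, then isotoping to the model using a relative $b$-Poincar\'e lemma---and this is indeed the strategy carried out in \cite{guimipi12}, so your approach is correct and aligns with what the paper indicates.
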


We recall from \cite{btoric} the following $b$-symplectic linearization theorem  for actions of a compact Lie group $G$ which is proved using the path method for $b$-forms.

\begin{thm}[\textbf{equivariant b-Darboux}, Guillemin-Miranda-Pires-Scott]\label{thm:equivDTgeneralG}
Let $\rho$ be a $b$-symplectic action of a compact Lie group $G$  on a $b$-symplectic manifold $(M,Z,\omega)$, and let $p \in Z$ be a fixed point of the action.  Then there exist local  analytic coordinates $(x_1,y_1,\dots, x_{n-1},y_{n-1}, z,t)$ centered at $p$ such that the action is linear in these coordinates and,
\[
\omega =\sum_{i=1}^{n-1} dx_i\wedge dy_i+\frac{1}{z}\,dz\wedge dt.
\]
\end{thm}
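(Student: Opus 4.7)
The plan is to mimic the classical $G$-equivariant Darboux theorem (Bochner combined with an averaged Moser path method) in the $b$-symplectic category, leveraging both Bochner's theorem for compact group actions and the $b$-Moser path method that underlies Proposition~\ref{prop:localb}. The compactness of $G$ is what makes the averaging meaningful, turning the non-equivariant $b$-Darboux theorem into an equivariant one.

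\textbf{Step 1: linearize $\rho$ and straighten $Z$.} Because $G$ is compact and fixes $p$, Bochner's theorem provides coordinates near $p$ in which $\rho$ is linear. The hypersurface $Z$ is automatically $G$-invariant because the top power $\omega^n$, interpreted as a section of $\Lambda^{2n}(^bT^*M)$, is $G$-invariant and $Z$ is the locus where it degenerates. Hence $T_pZ$ is a $G$-invariant hyperplane. Averaging a local defining function of $Z$ over $G$ yields a $G$-invariant function whose zero set is $Z$ and whose differential at $p$ is nonzero (assuming the $G$-action on the conormal line to $Z$ is trivial; the $\mathbb{Z}/2$ character case can be absorbed into the linear change of coordinates). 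I use this function as the coordinate $z$ and complete it to linear coordinates $(x_1,y_1,\dots,x_{n-1},y_{n-1},z,t)$ in which $\rho$ is linear and $Z=\{z=0\}$.

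\textbf{Step 2: $G$-equivariant $b$-Moser.} Let $\omega_0=\sum_{i=1}^{n-1}dx_i\wedge dy_i+\tfrac{1}{z}\,dz\wedge dt$. A further $G$-equivariant linear adjustment on the symplectic $G$-representation $^bT_pM$ arranges that $\omega_p=(\omega_0)_p$. The path $\omega_s=(1-s)\omega_0+s\omega$ is then $G$-invariant and $b$-symplectic near $p$. By the $b$-Poincar\'e lemma there is a $b$-one-form $\alpha$ with $\omega-\omega_0=d\alpha$; averaging produces $\bar\alpha=\int_G g^*\alpha\,d\mu$, which remains a $b$-one-form because $G$ preserves $Z$ and still satisfies $d\bar\alpha=\omega-\omega_0$ by $G$-invariance of the endpoints. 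Solving the $b$-Moser equation $i_{X_s}\omega_s=-\bar\alpha$ yields a $G$-invariant $b$-vector field tangent to $Z$, and its time-one flow $\phi$ commutes with the linear $G$-action and satisfies $\phi^*\omega=\omega_0$.

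\textbf{Main obstacle.} The delicate point is to perform the averaging entirely within the $b$-category: one must ensure that $g^*\alpha$ and hence the flow of $X_s$ are tangent to $Z$ throughout, which is guaranteed by the straightening $Z=\{z=0\}$ in Step~1 together with $G$-invariance of $Z$. A secondary check is that $\omega_s$ remains $b$-non-degenerate on a small neighborhood of $p$ for every $s\in[0,1]$, which follows from the agreement of the endpoints on $^bT_pM$ at $p$. In the analytic setting one additionally needs the analytic version of Bochner's theorem (available for compact $G$ by analytic averaging) and an analytic $b$-Poincar\'e lemma on a small $b$-contractible neighborhood; neither poses a further obstruction, so the whole construction goes through in the analytic category.
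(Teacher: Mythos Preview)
The paper does not actually prove this theorem: it is quoted from \cite{btoric}, and the only indication given about its proof is the sentence ``which is proved using the path method for $b$-forms.'' Your proposal is precisely a fleshing-out of that strategy --- Bochner linearization of the compact $G$-action followed by a $G$-averaged $b$-Moser argument --- and is correct in outline, so it matches what the paper says about the proof.

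One small point worth tightening in Step~2: rather than first forcing $\omega_p=(\omega_0)_p$ by an ad hoc ``$G$-equivariant linear adjustment,'' it is cleaner to take $\omega_0$ to be the constant $b$-form on $^bT_pM$ obtained by freezing $\omega$ at $p$; this is automatically $G$-invariant because $G$ acts linearly and preserves $\omega$, and the passage to the explicit Darboux expression is then a purely linear (hence $G$-equivariant) change of coordinates on the $G$-representation $^bT_pM$. With that adjustment your argument goes through, including in the analytic category.
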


This setting was proved for smooth forms but works in the analytic case too.
We can now apply the same trick as in the symplectic case which we saw in detail in section 2 and complexify the action to take the compact real part and apply theorem \ref{thm:equivDTgeneralG}. This yields,

\begin{thm} Let $\mathfrak g$ be a semisimple Lie algebra and let $(M,\omega)$ be a (real or complex) analytic $b$-symplectic
manifold $(M,Z,\omega)$. Let   $\rho: \mathfrak g\longrightarrow L_{analytic}$  be a
representation by  analytic  vector fields preserving  the $b$-symplectic structure.   Then there exist local coordinates $(x_1,y_1,\dots, x_{n-1},y_{n-1}, z,t)$ centered at a fixed point  $p \in Z$ for $\rho$ such that the action is linear in these coordinates and
\[
\omega =\sum_{i=1}^{n-1} dx_i\wedge dy_i+\frac{1}{z}\,dz\wedge dt.
\]
\end{thm}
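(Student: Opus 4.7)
The approach is to mimic the proof of Theorem~\ref{thm:main} (the symplectic case), replacing the ordinary Darboux theorem and equivariant Darboux theorem with their $b$-analogues (Proposition~\ref{prop:localb} and Theorem~\ref{thm:equivDTgeneralG}), and invoking again the complexification plus compact-real-form averaging trick to reduce the semisimple case to the compact one. First I would apply Proposition~\ref{prop:localb} to obtain local analytic coordinates in which $\omega=\omega_0:=\sum_{i=1}^{n-1} dx_i\wedge dy_i+\frac{1}{z}\,dz\wedge dt$ and $Z=\{z=0\}$. Then, by Theorem~\ref{thm:gs}, there is an analytic diffeomorphism $\phi$ defined near $p$ linearizing $\rho$. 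Since $Z$ is intrinsically determined by $\omega$, the action $\rho$ preserves $Z$ and hence the linearized action $\rho^{(1)}$ preserves $T_pZ$; a preliminary linear change of coordinates arranges that the image of $Z$ under $\phi^{-1}$ is exactly $\{z=0\}$, so that $\omega_1:=\phi^*\omega_0$ is a $\rho^{(1)}$-invariant $b$-symplectic form sharing the critical hypersurface of $\omega_0$.

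Next I would set up the $b$-Moser path $\omega_t=(1-t)\omega_0+t\omega_1$, which is $\rho^{(1)}$-invariant and $b$-symplectic in a neighbourhood of $p$. Writing $\omega_1-\omega_0=d\alpha$ for an analytic $b$-one-form $\alpha$ (using that both forms represent the same local $b$-cohomology class, in the spirit of the Mazzeo-Melrose decomposition used to prove Theorem~\ref{thm:equivDTgeneralG}), I define the analytic $b$-vector field $X_t$ by
\begin{equation*}
i_{X_t}\omega_t=-\alpha.
\end{equation*}
Then, exactly as in the proof of Theorem~\ref{thm:main}, I would complexify $\rho^{(1)}$ to $\rho^c$ on $\mathfrak g\otimes\mathbb C$, fix a compact real form $\mathfrak h$ with compact simply connected group $H$, integrate it to a local analytic $H$-action $\rho_H$ via Guillemin-Sternberg, and average $X_t^c$ over $H$ against Haar measure to obtain an $H$-invariant holomorphic vector field $X_t^{c,H}$. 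Taking its real part and restricting to the real slice yields a real analytic vector field $Y_t^H$ invariant under $\rho$, whose flow $\varphi_t$ is well defined for $t\in[0,1]$ and, by the usual averaged Moser computation, satisfies $\varphi_1^*\omega_1=\omega_0$ while preserving the linear representation $\rho^{(1)}$.

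The main obstacle, and the only essentially new ingredient beyond the proof of Theorem~\ref{thm:main}, is verifying that Moser's path method functions in the $b$-category in an equivariant manner: one needs $\alpha$ to be a genuine analytic $b$-one-form (so that $X_t$ is a $b$-vector field, automatically tangent to $Z$) and one needs the averaging over $H$ to preserve the $b$-structure. Both points are handled just as in the proof of Theorem~\ref{thm:equivDTgeneralG}, since averaging over a compact group of symmetries of $Z$ preserves tangency to $Z$, and thus the resulting flow lies in the category of $b$-diffeomorphisms. Composing the coordinate change $\phi$ with $\varphi_1$ then yields analytic coordinates $(x_1,y_1,\dots,x_{n-1},y_{n-1},z,t)$ in which $\rho$ is linear and $\omega$ has the required $b$-Darboux normal form.
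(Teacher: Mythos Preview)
Your proposal is correct and follows precisely the approach the paper indicates: reduce to the symplectic argument of Theorem~\ref{thm:main} by complexifying, passing to a compact real form, and invoking the equivariant $b$-Darboux theorem (Theorem~\ref{thm:equivDTgeneralG}) together with the $b$-Moser path method. In fact you supply considerably more detail than the paper's own one-line sketch, including the care needed to ensure the linearizing diffeomorphism respects the critical hypersurface $Z$ so that $\omega_0$ and $\omega_1$ share the same singular locus.
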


\subsection{ The general Poisson case}

We start by recalling the equivariant Weinstein theorem for smooth Poisson structures, proved in \cite{mirandazung2006} for tame Poisson structures in \cite{mirandamonnierzung} for Hamiltonian actions and recently by Frejlich and Marcut \cite{transversals} in full generality,

\begin{thm}[\textbf{equivariant splitting theorem}, Miranda-Zung, Frejlich-Marcut]
Let $(P^n,\Pi)$ be a smooth Poisson manifold, $p$ a point of $P$,
$2k = \rm rank \ \Pi (p)$, and $G$ a semisimple compact Lie group
which acts on $P$ preserving $\Pi$ and fixing the point $p$.  Then
there is a smooth canonical local coordinate system
$(x_1,y_1,\dots,x_{k},y_{k},$ $ z_1,\dots, z_{n-2k})$ near $p$, in
which the Poisson structure $\Pi$ can be written as
\begin{equation}
\Pi = \sum_{i=1}^k \frac{\partial}{\partial x_i}\wedge
\frac{\partial}{\partial y_i} + \sum_{ij}
f_{ij}(z)\frac{\partial}{\partial z_i}\wedge
\frac{\partial}{\partial z_j},
\end{equation}
with $f_{ij}(0)=0$, and in which the action of $G$ is linear and preserves the subspaces
$\{x_1 = y_1 = \hdots x_k = y_k = 0\}$ and $\{z_1 = \hdots =
z_{n-2k} = 0\}$.
\end{thm}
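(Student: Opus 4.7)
The plan is to combine three classical tools: Weinstein's (non-equivariant) splitting theorem, Bochner's linearization theorem for compact group actions near a fixed point, and the equivariant Darboux theorem, weaving them together by averaging over the Haar measure on $G$. Since $G$ is compact, fixes $p$, and preserves $\Pi$, by Bochner's theorem there exist local coordinates centered at $p$ in which $G$ acts linearly, via its isotropy representation on $T_pP$. The characteristic subspace $V_s:=\Pi^{\sharp}(p)(T_p^*P)\subset T_pP$ is $G$-invariant, and by complete reducibility of compact-group representations one can choose a $G$-invariant complement $V_t$ so that $T_pP=V_s\oplus V_t$ with $\dim V_s=2k$.

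Next I would promote this linear decomposition to a geometric one. The symplectic leaf $L$ through $p$ is tangent to $V_s$ and is automatically $G$-invariant, since $G$ preserves the Poisson bivector and permutes symplectic leaves while fixing $p$. For the transversal I produce a $G$-invariant submanifold $N$ through $p$ tangent to $V_t$, either by $G$-averaging an arbitrary local transversal or by using the exponential map of a $G$-invariant Riemannian metric. The classical Weinstein splitting theorem then yields a local Poisson diffeomorphism from a neighbourhood of $p$ onto a neighbourhood of $(p,p)$ in $L\times N$ identifying $\Pi$ with $\Pi|_L+\Pi|_N$, where $\Pi|_N$ vanishes at $p$ because $V_t$ is transverse to the characteristic distribution.

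To close the argument, I would upgrade each piece separately. On $L$ the restricted action is symplectic and fixes $p$, so the equivariant Darboux theorem of Weinstein--Chaperon provides canonical coordinates $(x_1,y_1,\dots,x_k,y_k)$ in which $\Pi|_L=\sum_i\partial_{x_i}\wedge\partial_{y_i}$ and $G$ acts linearly. On $N$ a further application of Bochner linearizes the $G$-action in the transverse coordinates $(z_1,\dots,z_{n-2k})$, and the transverse Poisson tensor necessarily takes the form $\sum_{ij}f_{ij}(z)\partial_{z_i}\wedge\partial_{z_j}$ with $f_{ij}(0)=0$. Concatenating the two coordinate charts yields the claimed normal form, with the $G$-action preserving both subspaces $\{x=y=0\}$ and $\{z=0\}$ by construction.

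The main obstacle is ensuring that Weinstein's splitting diffeomorphism itself can be chosen $G$-equivariantly. Weinstein's proof uses a path method built from a vector field whose time-one flow realises the splitting; replacing this vector field with its $G$-average yields an invariant candidate, but one must verify that the averaged flow still produces a Poisson isomorphism onto $L\times N$. This works because $\Pi$, $L$, and $N$ are all $G$-invariant so the Moser cohomological equation at each time $t$ admits a $G$-invariant primitive (obtained by averaging any primitive), and compactness of $G$ controls the averaged vector field on a uniform neighbourhood of $p$. This delicate equivariant path method is precisely the technical core of the Miranda--Zung and Frejlich--Marcut arguments.
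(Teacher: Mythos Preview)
The paper does not supply its own proof of this statement; it is recalled from \cite{mirandazung2006} and \cite{transversals}, and the only commentary offered is that the former uses Vorobjev's coupling-tensor description of a neighbourhood of a symplectic leaf while the latter uses Poisson transversals and sprays, both ultimately setting up a path method in which horizontal (leaf) and transversal data are controlled simultaneously. Your outline---Bochner-linearize, choose a $G$-invariant transversal, split, then apply equivariant Darboux on the leaf and Bochner on the transversal---captures the broad shape of those arguments, and you correctly flag the equivariant splitting as the crux.

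Two points deserve more care. First, Weinstein's original 1983 proof is not a single Moser path: it is an induction that peels off one canonical pair $(x_i,y_i)$ at a time via successive flows of Hamiltonian vector fields, so ``averaging the vector field'' is not literally available. One must first recast the splitting as a genuine path-method problem, and that reformulation is precisely what the Vorobjev coupling data (in \cite{mirandazung2006}) or the spray construction around a Poisson transversal (in \cite{transversals}) provide. Second, even after you obtain a $G$-equivariant Poisson isomorphism $U\cong L\times N$, the induced $G$-action on $L\times N$ need not be a product of actions on the factors: a Poisson automorphism of $(L\times N,\Pi_L+\Pi_N)$ has the form $(l,n)\mapsto(\phi_n(l),\psi(n))$ with $\phi_n$ possibly varying with $n$. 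Hence separately linearizing on $L$ and on $N$ and ``concatenating'' the charts does not by itself yield coordinates in which the full action is linear; an additional step (for instance a Bochner linearization on the product followed by a further equivariant Moser argument to restore the split Poisson form) is required. These refinements are exactly what the cited references supply, and your last paragraph is right that this is where the real work lies.
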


The difference between the approaches to the proof in \cite{mirandazung2006}
and  \cite{transversals} is that in the latter the authors introduce a technical important tool in Poisson geometry called {\bf{Poisson transversals}}, whereas in \cite{mirandazung2006} the approach was done using Vorobjev data \cite{vorobjev}.

In both technical approaches to the proof, there are horizontal data corresponding to the symplectic form on a leaf and transversal data. In \cite{vorobjev} the transversal data is the Poisson structure induced on a transverse vector field. In \cite{transversals} the choice of horizontal and vertical spaces is done carefully using sprays associated to adapted transversals. One of the important achievements of this last method is that we can apply the path method, changing the vertical part with a full control of the smooth/analytic properties.
 Following the same proof in \cite{transversals} replacing smooth sprays by analytic sprays, we can apply the analytic symplectic linearization theorem \ref{thm:main} in the horizontal direction  and theorem \ref{thm:gs} in the transversal direction to obtain,

\begin{thm} Let $\mathfrak g$ be a semisimple Lie algebra and let $(M,\omega)$ be a (real or complex) analytic $(P^n,\Pi)$ Poisson manifold and  $p$ a point of $P$ with $2k = \rm rank \ \Pi (p)$. Consider $\rho: \mathfrak g\longrightarrow L_{analytic}$  a
representation by  analytic  vector fields preserving $\Pi$ and fixing $p$.
   Then
there exists an analytic canonical local coordinate system
$(x_1,y_1,\dots,x_{k},y_{k},$ $ z_1,\dots, z_{n-2k})$ near $p$, in
which the Poisson structure $\Pi$ can be written as
\begin{equation}
\Pi = \sum_{i=1}^k \frac{\partial}{\partial x_i}\wedge
\frac{\partial}{\partial y_i} + \sum_{ij}
f_{ij}(z)\frac{\partial}{\partial z_i}\wedge
\frac{\partial}{\partial z_j},
\end{equation}
with $f_{ij}(0)=0$, and in which the representation $\rho$ is linear and preserves the subspaces
$\{x_1 = y_1 = \hdots x_k = y_k = 0\}$ and $\{z_1 = \hdots =
z_{n-2k} = 0\}$.
\end{thm}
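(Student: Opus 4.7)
The plan is to follow the Frejlich--Marcut approach via Poisson transversals, but in the analytic category and equivariantly with respect to a compact real form, then invoke the two previously established linearization results (Theorem \ref{thm:main} on the symplectic leaf and Theorem \ref{thm:gs} on the transversal).

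First, I would perform the complexification and unitarian trick exactly as in the proof of Theorem \ref{thm:main}: extend $\rho$ to a holomorphic representation $\rho^c$ of $\mathfrak{g} \otimes \mathbb{C}$ on a polydisk $U \subset \mathbb{C}^n$ around $p$, pick a compact real form $\mathfrak{h} \subset \mathfrak{g} \otimes \mathbb{C}$, and integrate $\rho^c|_{\mathfrak{h}}$ (via Proposition 2.1, 2.2 of \cite{guilleminsternberg}) to a local analytic action $\rho_H$ of the compact simply connected group $H$ preserving the complexified Poisson bivector $\Pi^c$ and fixing $p$. All subsequent constructions will be carried out $H$-equivariantly, and we take real parts at the very end.

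Second, I would set up the equivariant splitting at the point. The image $S := \Pi^{c,\sharp}_p(T^*_p P^c)$ is a $d\rho_H(p)$-invariant subspace of $T_p P^c$ of dimension $2k$; by averaging an arbitrary complement over $H$ we obtain a $d\rho_H(p)$-invariant complement $N$. Choose an $H$-invariant analytic submanifold $T \ni p$ with $T_p T = N$ (produced, for instance, by taking an $H$-equivariant analytic exponential from the linear model). This $T$ is an analytic Poisson transversal in the sense of \cite{transversals}, carrying an induced analytic Poisson structure $\Pi_T$ with $\Pi_T(p)=0$.

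Third, I would construct an $H$-equivariant analytic spray adapted to $T$. The whole spray machinery of \cite{transversals} is algebraic enough to go through analytically: start with any analytic spray, average over $H$ via the Haar integral (as in the proof of Theorem \ref{thm:main}), and project to obtain a spray adapted to the equivariantly chosen decomposition. The associated exponential diffeomorphism identifies an $H$-invariant neighborhood of $p$ in $P^c$ analytically and equivariantly with an open set in a bundle over $T$ whose fiber is a symplectic vector space. A Moser/path-method argument in the analytic $H$-equivariant category (identical in structure to the one carried out after equation \eqref{eqn:cohom} in the proof of Theorem \ref{thm:main}, but now applied to the difference between $\Pi^c$ and the product Poisson structure of a leaf symplectic form and $\Pi_T$) matches the two Poisson bivectors by an $H$-equivariant analytic diffeomorphism.

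Finally, I would linearize in each factor separately and descend to the reals. On the symplectic leaf direction, Theorem \ref{thm:main} produces analytic Darboux coordinates $(x_1,y_1,\dots,x_k,y_k)$ in which the induced $\mathfrak{g}$-action is linear. On the transversal $T$, the induced $\mathfrak{g}$-action fixes $p$, so Theorem \ref{thm:gs} yields analytic coordinates $(z_1,\dots,z_{n-2k})$ linearizing it; and since $\Pi_T(p)=0$ the coefficients $f_{ij}(z) := \{z_i,z_j\}_{\Pi_T}$ vanish at $0$. Taking the real part of the resulting $H$-equivariant holomorphic diffeomorphism (as in the closing paragraphs of Theorem \ref{thm:main}) produces the required real analytic coordinates. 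Invariance of the subspaces $\{x_\bullet=y_\bullet=0\}$ and $\{z_\bullet = 0\}$ is automatic because the decomposition $T_p P = S \oplus N$ was chosen $\rho$-invariant and the diffeomorphism we built respects this decomposition at every stage.

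I expect the main obstacle to be Step three: verifying that the spray-based splitting construction of \cite{transversals}, together with its Moser-type matching argument, carries over faithfully to the real/complex analytic category while retaining $H$-equivariance. The algebraic structure of the argument is transparent, but the analytic convergence estimates (control of the spray exponential, the Moser vector field, and the averaging) must be checked on a common polydisk around $p$. Once this is in place, the invocations of Theorems \ref{thm:main} and \ref{thm:gs} in the two transverse directions are routine.
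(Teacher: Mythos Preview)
Your proposal is correct and follows essentially the same route as the paper: run the Frejlich--Marcut Poisson-transversal/spray construction from \cite{transversals} in the analytic category, then invoke Theorem~\ref{thm:main} on the symplectic leaf direction and Theorem~\ref{thm:gs} on the transversal. The paper's own argument is a one-sentence sketch of exactly this; you have simply made explicit the complexification and compact-real-form step that the paper leaves implicit (it is needed because the equivariant splitting theorem of \cite{transversals} is stated for compact group actions), and you have correctly flagged the only nontrivial point, namely that the spray-and-Moser machinery of \cite{transversals} survives passage to the analytic category with $H$-equivariance.
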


\end{document}